\newtheorem{theorem}{Theorem}[section] 
\newtheorem{lemma}[theorem]{Lemma} 
\newtheorem{proposition}[theorem]{Proposition}
\newenvironment{proofof}[1]{\normalsize {\it Proof of #1}:}{{\hfill $\Box$}}
\newcommand{\R}{{\mathbb R}}
\newcommand{\N}{{\mathbb N}}
\newcommand{\C}{{\mathbb C}}
\newcommand{\CG}{{\mathbb C}G}
\def\Factors#1#2{{\cal F}_{#1}^{#2}}
\def\JFactors{{\cal F}}
\def\FF#1#2{{\rm F}_{#1}^{#2}}
\newcommand{\MF}{{\rm MF}}
\newenvironment{mylist}{\begin{list}{}{
\setlength{\parskip}{0mm}
\setlength{\topsep}{1mm}
\setlength{\parsep}{0mm}
\setlength{\itemsep}{0.5mm}
\setlength{\labelwidth}{7mm}
\setlength{\labelsep}{3mm}
\setlength{\itemindent}{0mm}
\setlength{\leftmargin}{12mm}
\setlength{\listparindent}{6mm}
}}{\end{list}}
\def \K {{\cal K}}
\title{Rapid Decay is Preserved by Graph Products}
\author{Laura Ciobanu, Derek F. Holt and Sarah Rees}
\date{31st October 2011}
\begin{document}
\maketitle
\begin{abstract}
We prove that the rapid decay property (RD) of groups is preserved by graph
products defined on finite simplicial graphs.
\medskip

\noindent 2000 Mathematics Subject Classification: 20E06, 43A15, 46L99.

\noindent Key words: rapid decay, graph products, free and direct
products, length functions, normal forms.

\end{abstract}

\section{Introduction}
\label{intro}

The main result proved in this paper is that the graph product of
finitely many groups with the rapid decay property (RD) has the
same property. This is stated formally in Section~\ref{sec:mainresult}
as Theorem~\ref{RD}. 

We call a function $\ell:G \rightarrow \R$ a {\it length function} for
a group $G$ if it satisfies
\[ \ell(1_G)=0,\quad\ell(g^{-1})=\ell(g),\quad
\ell(gh) \leq \ell(g)+ \ell(h), \quad \forall g,h \in G. \]
Following Jolissaint \cite{J90}, a group $G$ is said to have RD if
the operator norm $||.||_*$ for the group algebra $\CG$ is bounded by a
constant multiple
of the Sobolev norm $||.||_{2,r,\ell}$, a norm that is a variant of the
$l^2$ norm
weighted by a length function $\ell$ for $G$.

More precisely, RD holds for $G$ if there are constants $C,r$ and a
length function $\ell$ on $G$ such that
for any $\phi,\psi \in \C{G}$,
\[ ||\phi||_* := \sup_{\psi \in \CG}\frac{|| \phi*\psi||_2}{||\psi||_2}
\leq C||\phi||_{2,r,\ell}.\]
Here,
$\phi*\psi$ denotes the convolution of $\phi$ and $\psi$,
$||.||_2$ the
standard $l^2$ norm, and $ ||.||_{2,r,\ell}$ the Sobolev norm of order
$r$ with respect to $\ell$. So we have:
\begin{eqnarray*}
\phi*\psi(g)&=&\sum_{h \in G}\phi(h)\psi(h^{-1}g),\\
||\psi||_2 &=& \sqrt{\sum_{g \in G}|\psi(g)|^2},\\
||\phi||_{2,r,\ell}&=& \sqrt{ \sum_{g \in G} |\phi(g)|^2 (1 +
\ell(g))^{2r}}.
\end{eqnarray*}

A brief introduction to RD is given in \cite{CR05};
it is relevant to the Baum-Connes and Novikov conjectures (see an
account in \cite{V02}).
RD was originally studied by Jolissaint \cite{J90},
after it emerged from work of Haagerup \cite{Haagerup}, who proved
it for free groups. Jolissaint extended Haagerup's methods to
prove it for classical hyperbolic groups (i.e. discrete
cocompact groups of isometries of hyperbolic space), and to prove
that free and direct products of groups with RD inherit
the property, as do subgroups; de la Harpe \cite{H88} extended Jolissaint's arguments for classical hyperbolic groups to
derive RD for word hyperbolic groups. More recently, Drutu
and Sapir have shown that a group that is hyberbolic relative to a family of
parabolic subgroups with RD must itself have RD \cite{DS05}.
In all this work the proofs focus on the
factorisations of geodesic words. Other authors prove RD through
examination of actions of the group \cite{RamRobSteg, CR05}.

The graph product construction is a natural generalisation of both direct
and free products. Given a finite simplicial graph $\Gamma$ with a group
attached to each vertex, the associated \textit{graph product} is the
group generated by the vertex groups with the added relations that
elements of groups attached to adjacent vertices commute;
the representation of such a group as a graph product of directly
indecomposable groups is proved to be unique \cite{Radcliffe}.

Right-angled Artin groups \cite{Charney07}
(also known as graph groups) and right-angled Coxeter groups arise in this
way,
as the graph products of infinite cyclic groups and cyclic groups of order 2
respectively, and have been widely studied; some groups with
rather interesting properties arise via graph products,
including a group (a subgroup of a right-angled Artin group)
that has FP but is not finitely presented \cite{BestvinaBrady} and a
group ($F_2 \times F_2$) with insoluble subgroup membership
problem \cite{Mihailova}.
Both right-angled Artin groups
and right-angled Coxeter groups are already known
to possess RD, through their actions on CAT(0) cube
complexes \cite{CR05} (indeed all finite rank Coxeter groups possess RD for
this reason \cite{Chatterji}).

Graph products were introduced by Green in her PhD thesis \cite{G90} where,
in particular, a normal form was developed and the graph product construction
was shown to preserve residual finiteness; this work was extended by Hsu and
Wise in \cite{HsuWise} where, in particular, right-angled Artin groups
were shown
to embed in right-angled Coxeter groups and hence to be linear.
The preservation of semihyperbolicity, automaticity (as well as
asynchronous automaticity and biautomaticity) and the possession of a
complete rewrite system under graph products is proved in \cite{HM95},
necessary and sufficient conditions for the preservation of
hyperbolicity in \cite{Meier}, the
question of when the group is virtually free in
\cite{LohreySenizergues}, of orderability in \cite{Chiswell}.
Automorphisms and the structure of centralisers for graph products
of groups have been the subject of recent study, and in particular
graph products defined over random graphs have provoked some
interest for their applications
\cite{CostaFarber,CharneyRuaneStambaughVijayan,CharneyFarber}.

Our proof that the graph product construction preserves rapid decay will
build on the methods used by Jolissaint
for direct and free products \cite{J90}.
We use a reformulation of RD due to Jolissaint, 
explained in Section~\ref{sec:reformulation} below, which compares
norms on elements of $\CG$ with restricted support.
We examine in Section~\ref{graphprod} the geodesic decompositions of
elements in a graph product, and
show that they satisfy particularly useful properties, which will be
applied in our proof.
Section~\ref{sec:mainresult}
states our main result, Theorem~\ref{RD}, and reduces its proof to the 
proof of a further technical condition, which is similar to that of
Jolissaint's reformulation, but easier to verify in the context of graph
and free products; the same condition is used in \cite{J90} for free products.
The remaining two sections are devoted to the proof of
Proposition~\ref{lambda_condition},
thereby completing the proof of Theorem~\ref{RD} 

\section{A reformulation of the rapid decay property}
\label{sec:reformulation}

By~\cite[Lemma 2.1.3]{J90}, any length function $\ell$ on $G$ is equivalent
to one with $\ell(G) \subseteq \N$ and $\ell(g) >0\ \forall g \ne 1$.
Since RD is invariant under length equivalence, by \cite[Remark 1.1.7]{J90},
we shall assume from now on that all length functions have this property.

Given a length function $\ell$ on $G$,
and $k\in \N$, we define $C_k(\ell)$ to be the set $\{g\in G \mid
\ell(g)=k \}$.
We write $\chi_k$ for the characteristic function on $C_k$,
and for $\phi \in \CG$, we write $\phi_k$ for the pointwise product
$\phi.\chi_k$.

It is proved by Jolissaint \cite[Proposition 1.2.6]{J90} that RD for $G$
is equivalent to the following condition:

$(*)\quad$ There exist $c,r>0$  such that
   $\forall \phi,\psi \in \CG,\ k,l,m \in \N:$
\[
\begin{array}{ccll}
||(\phi_k * \psi_l)_m ||_2 &\leq& c||\phi_k||_{2,r,\ell} || \psi_l||_2&
{\rm if}\ |k-l|\leq m \leq k + l,\\
||(\phi_k * \psi_l)_m ||_2  &= &0&{\rm otherwise.}
\end{array} \]

It follows from the properties of a length function
that $||(\phi_k * \psi_l)_m ||_2 =0$ for $m$ outside
the range $[|k-l|,k+l]$. Hence we shall establish RD by verifying
the following condition:

$(**)\quad$ There exists a polynomial $P(x)$
such that $ \forall \phi,\psi \in \CG,\ k,l,m \in \N:$
\[ |k-l|\leq m \leq k + l \quad \Rightarrow \quad
||(\phi_k * \psi_l)_m ||_2 \leq P(k)||\phi_k||_2 || \psi_l||_2.  \]

\section{Graph products}
\label{graphprod}

Graph products of groups are studied in detail in \cite{G90}, and we
shall make use of the results from that thesis.
Let $\Gamma= (V,E)$ be a finite simplicial graph, together with vertex groups
$G_v$ for each $v \in V$. The associated graph product $G$ is defined to be the
quotient of the free product of the groups $G_v$ by the normal closure of
all the commutators $[g,g']$ for which $g \in G_{v}$, $g' \in G_w$ and
$\{v,w\}$ is an edge of the graph. We write $G=G(\Gamma; G_v, v \in
V)$.

Given such a group $G$, we define $\K$ to be the set of
cliques (including the empty set)
of the associated graph $\Gamma$, which we can identify with a set
of subsets of the vertex set $V$ of $\Gamma$.
We define $\K_m$ to be the subset of $\K$ of cliques of size $m$.
Given any subset $J$ of $V$ we define $G_J$ to be the subgroup
of $G$ generated by the elements of its subgroups $G_v$ with $v \in J$.
It follows from~\cite[Proposition 3.31]{G90} that $G_J$ is naturally isomorphic
to the graph product defined by the induced subgraph of $\Gamma$ with
vertex set $J$.  If $J$ is a clique, 
then $G_J$ is the direct product of its vertex groups.

Every element of the group $G$ can be written as a product
$y_1\cdots y_k$ for $k\geq 0$ and with each $y_i$ in a vertex group $G_{v_i}$;
that is, each element of the group has a representation
as a word 
over the set $S= \cup_{v \in V} (G_v
\setminus \{ 1 \})$.
We call such a representation an {\em expression}, the elements
$y_i$ the {\em syllables} of the expression, and say that the expression has
{\em syllable length} $k$; we define the syllable length of $g$,
$\lambda(g)$,
to be the minimum of the syllable lengths of expressions for $g$,
and say that an expression for $g$ is {\em reduced} if it has
syllable length $\lambda(g)$.
The function $\lambda$ is easily seen to be a length function, but it is
not the one that we shall use to prove RD.

We define $\Lambda_k = \{ g \in G: \lambda(g)=k\}$.

Note that when $G$ is a free product, every expression for which
consecutive $y_i$s come from distinct vertex groups is reduced,
and each reduced expression corresponds to a distinct element of $G$.
That is not true in general. But
it is proved in \cite[Theorem 3.9]{G90} that any expression for $g$
can be transformed to any reduced expression by a sequence of replacements
of the form $y'y\rightarrow yy'$ where $y,y'$ belong to commuting vertex
groups, or $y'y''\rightarrow y$, where $y'y''=y$ is a relation
holding between three elements of one of the vertex groups, or deletion
of $y'y''$ where the $y',y''$ are mutually inverse elements of a vertex group.
Hence any reduced expression for an element $g$ involves the same
syllables, but the order of the syllables in the expression is not
determined.

We shall need to estimate the number of factorisations of elements in graph products.
For $g \in \Lambda_{k+l}$, we use the notation
\begin{eqnarray*}
\Factors{k}{l}(g)&:=&\{(g_1,g_2) \mid g=_G g_1 g_2,\,
\lambda(g_1)=k,\,\lambda(g_2)=l\},\\
\FF{k}{l}&:=&\sup_{g \in \Lambda_{k+l}} |\Factors{k}{l}(g)|.
\end{eqnarray*}
Similarly, given a clique $J \in \K$ and  $g \in \Lambda_{k+l+|J|}$,
we use the notation
\begin{eqnarray*}
\Factors{k}{l}(J,g)&:=&\{(g_1,s,g_2) \mid g=_G g_1s g_2,\,
\lambda(g_1)=k,\,\lambda(g_2)=l,\, s \in G_J,\, \lambda(s)=|J|\},\\
\FF{k}{l}(J)&:=&\sup_{g \in \Lambda_{k+l+|J|}} |\Factors{k}{l}(J,g)|.
\end{eqnarray*}

By considering factorisations of $g^{-1}$, we see that $\FF{k}{l}$ = $\FF{l}{k}$
and $\FF{k}{l}(J)$ = $\FF{l}{k}(J)$.

Let $g,h \in G$. We say that $h$ is a left divisor of $g$, 
if $\lambda(g) = \lambda(h) + \lambda(h^{-1}g)$, or,
equivalently, if $h$ has a reduced expression $v$ that is a prefix
of a reduced expression $w$ for $g$.
We define right divisors similarly.

\begin{lemma}
\label{P1}
For each $J \in \K$, 
$\FF{k}{l}(J)$ is bounded by a polynomial in $\min(k,l)$.
\end{lemma}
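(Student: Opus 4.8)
The plan is to exploit the combinatorial structure of reduced expressions described above, which realises each $g$ as a ``heap'' of syllables. By the shuffle result of \cite{G90} quoted above, the reduced expressions for a fixed $g$ all use the same multiset of syllables and differ only by interchanges of adjacent syllables lying in commuting (adjacent) vertex groups. This endows the syllables of $g$ with a well-defined partial order $P=P(g)$: two syllables are comparable exactly when they are \emph{dependent} (they lie in the same vertex group, or in distinct non-adjacent vertex groups), in which case their relative order is the same in every reduced expression, while two syllables from adjacent vertex groups are incomparable. The reduced expressions for $g$ are then precisely the linear extensions of $P$. The crucial feature of $P$ that I would record first is that any antichain consists of pairwise independent syllables, hence of syllables lying in pairwise distinct, pairwise adjacent vertex groups; such a set of vertices is a clique, so the width of $P$ is at most the clique number of $\Gamma$, and in particular at most $|V|$.

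By the symmetry $\FF{k}{l}(J)=\FF{l}{k}(J)$ I may assume $k\le l$, so that $\min(k,l)=k$, and it suffices to bound the number of factorisations by a polynomial in $k$. Each factorisation $(g_1,s,g_2)\in\Factors{k}{l}(J,g)$ yields a reduced expression for $g$ by concatenating reduced expressions for $g_1$, $s$ and $g_2$; since this is a linear extension of $P$, its first $k$ syllables form an order ideal $D_1$ of $P$ of size $k$, whose product (taken in the order of $P$) is $g_1$. The central observation is that the whole factorisation is recovered from $D_1$. Indeed, once $g_1$ is fixed, the element $h:=g_1^{-1}g$ has heap $P\setminus D_1$, and $s$ must be a left divisor of $h$ lying in $G_J$ with $\lambda(s)=|J|$; as $J$ is a clique this means $s$ uses exactly one syllable from each $G_v$, $v\in J$, and these $|J|$ pairwise independent syllables are down-closed in $P\setminus D_1$, hence all minimal there. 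Since two syllables of one vertex group are always comparable, $P\setminus D_1$ has at most one minimal syllable in each $G_v$, so there is at most one candidate for $s$, and then $g_2=s^{-1}h$ is forced. Thus the map $(g_1,s,g_2)\mapsto D_1$ is injective, and $|\Factors{k}{l}(J,g)|$ is bounded by the number of order ideals of $P$ of size $k$.

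It remains to count order ideals of a fixed size in a poset of bounded width. Writing $w$ for the width of $P$ (so $w\le|V|$), Dilworth's theorem partitions $P$ into $w$ chains $C_1,\dots,C_w$. Any order ideal $D$ meets each $C_i$ in an initial segment, so $D$ is determined by the tuple $(|D\cap C_1|,\dots,|D\cap C_w|)$ of prefix lengths; when $|D|=k$ these are non-negative and sum to $k$, so the number of ideals of size $k$ is at most the number of such compositions, namely $\binom{k+w-1}{w-1}$. This is a polynomial in $k$ of degree $w-1\le|V|-1$, and the bound is uniform over $g\in\Lambda_{k+l+|J|}$; combined with the symmetry this gives $\FF{k}{l}(J)=O\!\left(\min(k,l)^{\,|V|-1}\right)$, as required.

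The main obstacle I anticipate is not the counting but the careful justification of the heap model from the normal-form theory of \cite{G90}: I must verify that the dependence order $P$ is genuinely well defined (that the relative order of dependent syllables really is an invariant of $g$), that the reduced expressions for $g$ are exactly the linear extensions of $P$, and that an initial segment of such an expression yields a genuine factorisation of the expected syllable length. The delicate point here is the merging of syllables within a single vertex group: I need to confirm that restricting to an order ideal never brings two syllables of one vertex group together so as to reduce the length below the ideal's cardinality, and that the unique minimal $G_v$-syllable indeed produces the claimed factor $s$. Once these structural facts are secured, the bounded-width counting argument is routine.
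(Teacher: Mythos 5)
Your proof is correct in substance but takes a genuinely different route from the paper's. The paper fixes a reduced expression $y_1\cdots y_m$ for $g$ and shows that a left divisor $g_1\in\Lambda_k$ is determined by its \emph{unconstrained} syllables (those whose vertex group commutes with the vertex groups of all later syllables of $g_1$); since $g_1$ has at most one unconstrained syllable per vertex group, each necessarily among the first $k$ syllables of that vertex group in $y_1\cdots y_m$, there are at most $(k+1)^{|V|}$ possibilities for $g_1$, at most $(|J|+1)^{|J|}$ for $s$, and $g_2$ is then determined, giving $\FF{k}{l}(J)\le (k+1)^{|V|}(|J|+1)^{|J|}$. You instead encode $g$ as a heap, identify each factorisation in $\Factors{k}{l}(J,g)$ with the order ideal $D_1$ of size $k$ formed by the syllables of $g_1$, and count ideals via Dilworth's theorem together with the observation that antichains of the heap are supported on cliques of $\Gamma$, so the width is at most $|V|$. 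Your route needs more infrastructure (the linear-extension correspondence and Dilworth), but it buys a sharper and cleaner bound, $\binom{k+|V|-1}{|V|-1}$ with no dependence on $J$, and your observation that $s$ is \emph{uniquely} determined by $g_1$ and $g$ (as the product of the unique minimal syllable of $P\setminus D_1$ in each $G_v$, $v\in J$) is genuinely stronger than the paper's crude count for the number of choices of $s$.

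One assertion in your set-up is false as stated, although your proof survives it. It is not true that two syllables from adjacent vertex groups are incomparable; comparability in the heap order is not the same as dependence, and the relation you describe is not even transitive, hence not a partial order. For instance, if $u$ and $v$ are adjacent but $w$ is adjacent to neither, then $g=y_uy_wy_v$ (with $y_u\in G_u$, $y_w\in G_w$, $y_v\in G_v$) has a unique reduced expression, so $y_u$ precedes $y_v$ in every reduced expression and they are comparable in the heap order, despite being independent. The correct definition of $P$ is the \emph{transitive closure} of the dependence order. This correction costs you nothing, because every later step uses only implications valid for the transitive closure: dependent syllables are comparable (which gives at most one minimal syllable of $P\setminus D_1$ in each vertex group, driving the uniqueness of $s$), and incomparable syllables are independent (which puts antichains on cliques, driving the width bound, and makes adjacent transpositions of incomparable syllables in a linear extension legal shuffles, which is what makes reduced expressions exactly the linear extensions of $P$). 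So define $P$ as the transitive closure and delete the false converse; the injectivity of $(g_1,s,g_2)\mapsto D_1$ and the count $\binom{k+w-1}{w-1}$ then go through as you wrote them.
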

\begin{proof}
Since $\FF{k}{l}(J) = \FF{l}{k}(J)$, it is sufficient to prove that
$\FF{k}{l}(J)$ is bounded by a polynomial in $k$.

Let $g \in \Lambda_m$ with $m\geq k+l$, and
consider factorisations $g=g_1sg_2$ with $g_1 \in \Lambda_k$,
$g_2 \in \Lambda_l$, and $s \in G_J$ with $\lambda(s)=|J|$.

We start by bounding the number of left divisors $g_1$ of $g$ with
$g_1 \in \Lambda_k$. Suppose that $y_1 y_2 \cdots y_m$ is a reduced
expression for $g$, with each $y_i$ an element of a vertex group. Then
$g_1=y_{\sigma(1)} \cdots y_{\sigma(k)},$ where $\sigma$ is a permutation of
$\{1, \ldots, m\}.$
When we transform the original expression $y_1 y_2 \cdots y_m$ to the new
expression, using shuffles, there is no need to swap any of the syllables
in $g_1$ among themselves, so we can assume that
$\sigma(1) \leq \sigma(2) \leq \cdots \leq \sigma(k)$.

Let us call the syllable $y_{\sigma(i)}$ with $1 \le i \le k$
{\em unconstrained} if its vertex group commutes with the vertex groups of
each of $y_{\sigma(i+1)},\ldots,y_{\sigma(k)}.$
(In particular, $y_{\sigma(k)}$ is unconstrained.) 

We claim that $g_1$ is uniquely determined by its unconstrained syllables
$y_{\sigma(i)}$ with $1 \le i \le k$.
To show this, suppose that $g_1,g_1' \in \Lambda_k$ are left divisors of $g$
corresponding to permutations $\sigma$, $\sigma'$ of
$\{1, \ldots, m\},$ where $g_1,g_1'$ have the same unconstrained syllables.
Suppose that $g_1 \ne g'_1$, and let $i$ be maximal such that $y_{\sigma(i)}$
is a syllable of $g_1$ but not of $g'_1$.  Then, by assumption, $y_{\sigma(i)}$
is not unconstrained, so there exists $j$ with $i < j \le k$ such that
$y_{\sigma(i)}$ does not commute with $y_{\sigma(j)}$. By maximality of $i$,
$y_{\sigma(j)}$ is a syllable of $g'_1$. Since $y_{\sigma(i)}$ and
$y_{\sigma(j)}$ do not commute,
$y_{\sigma(i)}$ must remain to the left of $y_{\sigma(j)}$ after any shuffles
of $g$. So $y_{\sigma(i)}$ must be a syllable of $g'_1$, a contradiction.

If there were two unconstrained syllables in $g_1$ from the same
vertex group $G_v$, they could be moved together, thereby shortening
the expression; hence there is at most one. Since $g_1$ has syllable length $k$,
an unconstrained syllable $y_{\sigma(i)}$ in
$g_1$ from $G_v$ must be among the first $k$
syllables in $y_1y_2 \cdots y_m$ that come from $G_v$.
Hence there are $k+1$ possible ways of selecting (0 or 1) syllables of 
$y_1y_2 \cdots y_m$ 
from $G_v$ that will be unconstrained in $g_1$.
This gives $(k+1)^{|V|}$ choices in total for the
unconstrained syllables of $g_1$, and hence $(k+1)^{|V|}$ choices for the left
divisor $g_1$.

Similarly, we have an upper bound of $(|J|+1)^{|V|}$ for the number of left
divisors of length $|J|$ of $g_1^{-1}g$ for a given $g_1$. In fact, the bound
for the number of choices for $s$ is $(|J|+1)^{|J|}$, as $s\in G_J$.
The right divisor $g_2$ is then completely determined by $g,g_1$ and $s$.
This results in the inequality
$\FF{k}{l}(J) \leq (k+1)^{|V|}(|J|+1)^{|J|},$
giving the required polynomial bound in $k$.
\end{proof}

\begin{lemma}\label{P2}
Suppose that $g, h_1, h_2 \in G$ with $g=_G h_1h_2$,
$h_1 \in \Lambda_k, h_2\in \Lambda_l$, and $g \in \Lambda_{k+l-q}$ with
$q\ge 0$.  Then $h_1=_G g_1s_1w$ and $h_2=_G w^{-1}s_2g_2$,
where:
\begin{mylist}
\item[(1)] $s_1,s_2 \in J$  for some $J \in \K$, 
and $\lambda(s_1) = \lambda(s_2) = \lambda(s_1s_2) = |J|$,
\item[(2)] $q = |J|+2\lambda(w)$.
\end{mylist}
\end{lemma}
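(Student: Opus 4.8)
The plan is to isolate, inside the product $h_1h_2$, the two distinct ways in which syllables of $h_1$ and $h_2$ interact when the expression is reduced: genuine cancellation of a syllable against its inverse (which will account for $w$) and the partial merging of syllables lying in commuting vertex groups (which will account for $s_1,s_2$). Throughout I would work with reduced expressions and the shuffle/merge/delete moves of \cite[Theorem 3.9]{G90}.

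First I would extract the \emph{maximal cancelling overlap}. Among all factorisations $h_1=_G pw$, $h_2=_G w^{-1}r$ in which $w$ is a right divisor of $h_1$ and $w^{-1}$ a left divisor of $h_2$, I choose one with $\lambda(w)$ maximal; such $w$ exist since $\lambda(w)\le\min(k,l)$ and $w=1$ always qualifies. Then $g=_G p(ww^{-1})r=_G pr$, and because the two divisor decompositions are reduced we get $k+l=2\lambda(w)+\lambda(p)+\lambda(r)$, so that $q=2\lambda(w)+q_0$ where $q_0:=\lambda(p)+\lambda(r)-\lambda(pr)$. A short argument shows maximality of $w$ forces $p$ and $r$ to have trivial cancelling overlap: any common overlap $w_1$ of $p,r$ would, after reduction, give the strictly longer overlap $w_1w$ of $h_1,h_2$.

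Next I would analyse the interaction of $p$ and $r$. Call a vertex $v$ \emph{active} if a reduced expression for $p$ has a $G_v$-syllable shuffleable to its right-hand end and a reduced expression for $r$ has a $G_v$-syllable shuffleable to its left-hand end; let $J$ be the set of active vertices, with associated syllables $a_v$ (at the end of $p$) and $b_v$ (at the start of $r$). Two facts are central. (a) $J$ is a clique: if $u,v\in J$ were non-adjacent then $a_u,a_v$ could not both reach the right end of $p$, since whichever occurs earlier would have to commute past the later, forcing the edge $\{u,v\}$. (b) $a_vb_v\ne 1$ for each $v\in J$: otherwise $a_v$ would be a non-trivial common overlap of $p$ and $r$. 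A routine point, used repeatedly, is that $p$ cannot have two $G_v$-syllables both shuffleable to its right end, as they would merge and contradict reducedness; hence $a_v$ is unique. Using that $J$ is a clique, the pairwise-commuting syllables $\{a_v:v\in J\}$ can be shuffled simultaneously to the right end, giving $p=_G g_1s_1$ with $s_1=\prod_{v\in J}a_v\in G_J$ and $\lambda(s_1)=|J|$, and symmetrically $r=_G s_2g_2$ with $s_2\in G_J$, $\lambda(s_2)=|J|$. Setting $h_1=_G g_1s_1w$, $h_2=_G w^{-1}s_2g_2$ then yields (1), since the $v$-component of $s_1s_2$ is $a_vb_v\ne 1$, whence $\lambda(s_1s_2)=|J|$.

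It remains to prove (2), i.e. that $q_0=|J|$; equivalently that $g=_G g_1(s_1s_2)g_2$ is reduced, for then $\lambda(g)=\lambda(g_1)+|J|+\lambda(g_2)=\lambda(p)+\lambda(r)-|J|$ and $q=2\lambda(w)+|J|$ as required. This reducedness check is the main obstacle, being the point where the definition of $J$ and the maximality of $w$ must be combined. I would argue by contradiction using Green's characterisation of reduced expressions: if $g_1(s_1s_2)g_2$ were not reduced, two syllables in a common group $G_u$ could be shuffled adjacent. As $s_1s_2\in G_J$ contributes at most one syllable per group, at least one of the two must lie in $g_1$ or $g_2$; a short case analysis (one in $g_1$ and one in $s_1s_2$, one in $g_1$ and one in $g_2$, and the mirror cases) shows that $g_1$, say, then has a $G_u$-syllable shuffleable to the right end of $p$ in addition to $a_u$, contradicting uniqueness when $u\in J$, and contradicting the definition of $J$ when $u\notin J$. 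Ruling out every such reduction completes the proof.
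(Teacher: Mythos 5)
Your proof is correct and follows essentially the same route as the paper's: split off a maximal cancelling overlap $w$ (so that the remaining pieces $p$ and $r$ interact without cancellation), then identify the merging interface as a set of commuting syllables forming a clique $J$, yielding $h_1 =_G g_1s_1w$ and $h_2 =_G w^{-1}s_2g_2$ with $q = |J| + 2\lambda(w)$. The only divergence is one of detail rather than strategy: where the paper selects $s_1,s_2$ as \emph{minimal} divisors satisfying $\lambda(s_1s_2)=\lambda(s_1)+\lambda(s_2)-q$ and lets minimality do the work, you construct $J$ explicitly from the syllables shuffleable to the interface and then verify reducedness of $g_1(s_1s_2)g_2$ by a case analysis --- a more constructive but equivalent implementation of the same step.
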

\begin{proof}
Suppose first that there is no cancellation between $h_1$ and $h_2$, that is,
when we reduce the expression $h_1h_2$, no syllable becomes equal to the
identity. Then pick a right
divisor $s_1$ of $h_1$, and a left divisor $s_2$ of $h_2$,
both minimal with respect to syllable length,
such that $\lambda(s_1s_2) = \lambda(s_1) + \lambda(s_2) -q$.
(If $q=0$ then $s_1$ and $s_2$ are empty.)
Minimality ensures that
every syllable of $s_1$ (or $s_2$) must merge without cancelling with a symbol
of $s_2$  (or $s_1$). So $\lambda(s_1)=\lambda(s_2)=\lambda(s_1s_2)$,
the syllables within each of $s_1$ and $s_2$ must commute,
and there is a clique $J\subset \Gamma$ of size $q$ such that
$s_1, s_2 , s_1s_2 \in G_J$.

If there is cancellation between $h_1$ and $h_2$, then we can
find a right divisor $w$ of $h_1$ such that $w^{-1}$ is a left divisor of
$h_2$. We choose $w$ of maximal syllable length, so that
$h_1=h'_1w$ and $h_2=w^{-1}h'_2$ for some
$h'_1, h'_2 \in G$,  and no cancellation occurs between $h'_1$ and $h'_2$.
Applying the above argument to $h'_1$ and $h'_2$, we find a clique $J$ of size
$q-2\lambda(w)$,
a right divisor $s_1$ of $h'_1$, and a left divisor $s_2$ of $h'_2$,
with $\lambda(s_1)=\lambda(s_2)=\lambda(s_1s_2)=|J|$.
\end{proof}

\section{Main result}
\label{sec:mainresult}

We prove
\begin{theorem}
\label{RD}
Suppose that $G=G(\Gamma; G_v, v \in V)$ is a graph product of
groups defined with respect to a finite simplicial graph $\Gamma=(V,E)$,
and suppose that each vertex group $G_v$ satisfies RD.
Then $G$ satisfies RD.
\end{theorem}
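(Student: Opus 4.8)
The plan is to verify condition $(**)$ from Section~\ref{sec:reformulation}, since the authors have already reduced RD to this statement. So I need to produce a polynomial $P(k)$ such that whenever $|k-l|\le m\le k+l$, the norm $||(\phi_k*\psi_l)_m||_2$ is bounded by $P(k)||\phi_k||_2||\psi_l||_2$. The strategy, following Jolissaint's treatment of free and direct products, is to unwind the definition of convolution restricted to the length shells $\Lambda_k,\Lambda_l,\Lambda_m$ and estimate the resulting $l^2$ norm by a Cauchy--Schwarz argument, where the polynomial factor arises from a bound on the number of factorisations of each group element. This is exactly what Lemmas~\ref{P1} and~\ref{P2} are set up to deliver.

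First I would fix $g\in\Lambda_m$ and write out $(\phi_k*\psi_l)_m(g)=\sum_{h}\phi_k(h)\psi_l(h^{-1}g)$, where the sum runs over $h\in\Lambda_k$ with $h^{-1}g\in\Lambda_l$; equivalently over factorisations $g=h_1h_2$ with $h_1\in\Lambda_k$, $h_2\in\Lambda_l$. The key geometric input is Lemma~\ref{P2}: each such factorisation of an element of $\Lambda_m$ with $m=k+l-q$ forces a cancellation/merging pattern controlled by a clique $J$ with $q=|J|+2\lambda(w)$, so that $h_1=g_1s_1w$ and $h_2=w^{-1}s_2g_2$. I would use this to reparametrise the sum: rather than summing over $h$ directly, group the contributions according to the clique $J$ and the reduced ``core'' data $(g_1,w,g_2)$, so that the inner product structure decouples into a piece depending on $h_1$ through its left divisor $g_1$ and middle $s_1w$, and a piece depending on $h_2$ through $w^{-1}s_2$ and its right divisor $g_2$. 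Applying Cauchy--Schwarz to the resulting bilinear sum over $g$, I expect each application to cost a factor equal to the number of admissible factorisations, which Lemma~\ref{P1} bounds by $\FF{k}{l}(J)\le(k+1)^{|V|}(|J|+1)^{|J|}$, a polynomial in $\min(k,l)$. Summing over the finitely many cliques $J\in\K$ and the finitely many values of $q$ and $\lambda(w)$ consistent with $|k-l|\le m\le k+l$ then yields a single polynomial bound $P(k)$.

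I anticipate that the main obstacle is the bookkeeping in the Cauchy--Schwarz step: one must carefully arrange the double sum over $g\in\Lambda_m$ and over factorisations so that the counting bound $\FF{k}{l}(J)$ is applied to a sum that genuinely reduces to $||\phi_k||_2^2$ and $||\psi_l||_2^2$ without double-counting or losing control of the cross terms coming from the shared divisor $w$ and clique element $s_1,s_2$. In the pure free-product case treated by Jolissaint there is no clique contribution and $w$ accounts for all of $q$, but here the direct-product behaviour inside each clique means a single pair $(h_1,h_2)$ can merge syllables without cancelling, and Lemma~\ref{P2} is precisely the tool that separates the ``merge'' part ($J$) from the ``cancel'' part ($w$). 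The delicate point is to confirm that after summing over $J$ and $\lambda(w)$ the total remains polynomial rather than merely subexponential; because $\K$ is finite and $|J|\le|V|$ is bounded, the factor $(|J|+1)^{|J|}$ is an absolute constant and the dominant $k$-dependence is the polynomial $(k+1)^{|V|}$, so the bound survives.

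Finally, I would assemble these estimates into the verification of $(**)$, noting that the hypothesis that each vertex group $G_v$ has RD enters through the local norm comparison on the clique subgroups $G_J$ (which are direct products of RD groups and hence have RD by Jolissaint's direct-product result), allowing the syllable-merging contributions inside cliques to be controlled by the vertex-group RD constants. Combining the local RD bounds on the $G_J$ with the global factorisation count then gives the desired polynomial $P$, completing the reduction and hence the proof of Theorem~\ref{RD}. In practice the authors defer the technical heart to Proposition~\ref{lambda_condition} and the two sections following it, so my proof sketch mirrors their outline: reduce to $(**)$, control factorisations via Lemmas~\ref{P1} and~\ref{P2}, and invoke vertex-group RD locally on cliques.
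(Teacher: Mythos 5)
Your outline does follow the paper's strategy in broad strokes (reduce RD to condition $(**)$, control factorisations via Lemmas~\ref{P1} and~\ref{P2}, feed in the vertex groups' RD through the clique subgroups $G_J$), but there is a genuine gap at its centre: you conflate the two length functions in play. Condition $(**)$ restricts $\phi$, $\psi$ and the product to the shells $C_k$, $C_l$, $C_m$ of the length function $\ell$ built from the vertex-group length functions, whereas your convolution sum runs over factorisations $g=h_1h_2$ with $h_1\in\Lambda_k$, $h_2\in\Lambda_l$, i.e.\ over shells of the syllable length $\lambda$; these are different functions, and an element of $C_k$ can have any syllable length between $1$ and $k$. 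The discrepancy is not cosmetic, because the statement your sketch would actually establish --- a bound
\[ ||(\phi_{(k)}*\psi_{(l)})_{(m)}||_2 \le P(k)\,||\phi_{(k)}||_2\,||\psi_{(l)}||_2 \]
on $\lambda$-shells, with the polynomial coming only from the factorisation counts $\FF{k}{l}(J)$ --- is false. Take a single vertex group $G_v=\Z$: every nontrivial element lies in $\Lambda_1$, so with $k=l=m=1$ the claim would give a uniform bound $||(\phi*\psi)_{(1)}||_2\le P(1)\,||\phi||_2\,||\psi||_2$ for all finitely supported $\phi,\psi$ on $\Z\setminus\{0\}$; but for $\phi=\psi$ the characteristic function of $\{1,\dots,n\}$ the left side grows like $n^{3/2}$ while $||\phi||_2||\psi||_2=n$. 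No counting of factorisations can repair this: inside a clique subgroup the syllable length is bounded by $|J|$ while the group is infinite, so the combinatorics of Lemmas~\ref{P1} and~\ref{P2} are blind to exactly the part of the convolution where the analytic content of RD is needed.

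This is precisely why the paper does not prove a plain-$l^2$ inequality on $\lambda$-shells. Its Proposition~\ref{lambda_condition} keeps the Sobolev norm $||\phi_{(k)}||_{2,r,\ell}$ --- with respect to $\ell$, not $\lambda$ --- on the right-hand side; the weight $(1+\ell(g))^{2r}$ is where RD of the clique subgroups $G_J$ (via Jolissaint's direct-product lemma) is absorbed after the Lemma~\ref{P2} decomposition $h_1=g_1s_1w$, $h_2=w^{-1}s_2g_2$ and the counting from Lemma~\ref{P1}. A second, separate argument (Jolissaint's summation, reproduced in Section~\ref{sec:mainresult}) then converts this mixed statement into $(**)$: one decomposes $C_k\subseteq\cup_{j\le k}\Lambda_j$ using $\lambda\le\ell$, applies the Proposition shell by shell together with the triangle inequality and Lemma~\ref{CS}, and only at the very end trades $||\cdot||_{2,r,\ell}$ for $(1+k)^r||\cdot||_2$, which is legitimate solely because $\ell$ is constant, equal to $k$, on $C_k$. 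Your sketch contains neither the mixed-norm intermediate statement nor this conversion step; your closing remark that vertex-group RD ``enters through the local norm comparison on cliques'' is the right instinct, but without the Sobolev weight and the two-shell bookkeeping the assembly you describe cannot produce the bound $P(k)||\phi_k||_2||\psi_l||_2$.
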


The proof generalises Jolissaint's proof in \cite{J90} of RD for free products,
which is itself a generalisation of Haagerup's proof for free groups.
We use the result of \cite[Lemma 2.1.2]{J90}, which implies (although it
is not stated explicitly) that RD is preserved by taking
direct products, and rely on the existence of polynomial bounds relating to
factorisation in graph products proved in Lemma~\ref{P1} and Lemma~\ref{P2}.

The following easy consequence of the Cauchy-Schwarz inequality will
be used frequently.
\begin{lemma}\label{CS}
For any positive integer $M$ and real numbers $a_1, \dots, a_M$,
\[\left(\sum_{i=1}^M a_i\right)^2 \leq M \left(\sum_{i=1}^M a_i^2\right).\]
\end{lemma}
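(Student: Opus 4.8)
The plan is to recognise this inequality as the special case of the Cauchy--Schwarz inequality obtained by pairing the vector $(a_1,\dots,a_M)$ with the all-ones vector $(1,\dots,1)$. Concretely, I would write each summand as $a_i = a_i\cdot 1$ and apply Cauchy--Schwarz to the two vectors to obtain
\[
\left(\sum_{i=1}^M a_i\right)^2=\left(\sum_{i=1}^M a_i\cdot 1\right)^2\leq\left(\sum_{i=1}^M a_i^2\right)\left(\sum_{i=1}^M 1^2\right)=M\sum_{i=1}^M a_i^2,
\]
which is exactly the claimed bound. This is a one-line argument once Cauchy--Schwarz is invoked, so no auxiliary construction is needed.

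If a self-contained proof is preferred (avoiding any appeal to Cauchy--Schwarz), I would instead expand both sides and verify the algebraic identity
\[
M\sum_{i=1}^M a_i^2-\left(\sum_{i=1}^M a_i\right)^2=\sum_{1\leq i<j\leq M}(a_i-a_j)^2.
\]
The right-hand side is a sum of squares of real numbers, hence non-negative, which immediately yields the inequality. This form has the mild advantage of exhibiting the exact deficit, and it shows that equality holds precisely when $a_1=\dots=a_M$.

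I do not expect any genuine obstacle here: the statement holds for arbitrary reals (no sign hypothesis on the $a_i$ is needed, since only squares appear on the decisive side of the estimate), and both routes above are elementary. The only point worth flagging for the sequel is that the factor $M$ is sharp and the bound is lossy unless the $a_i$ coincide, so when the lemma is applied repeatedly later in the paper it is the accumulated factors of $M$, rather than the inequality itself, that will need to be tracked carefully to keep the final estimates polynomial.
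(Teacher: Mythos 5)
Your proposal is correct and matches the paper's approach: the paper states this lemma as an ``easy consequence of the Cauchy--Schwarz inequality'' and omits the details, and your one-line application of Cauchy--Schwarz to $(a_1,\dots,a_M)$ and the all-ones vector is precisely that intended argument. Your alternative sum-of-squares identity is a valid bonus, but nothing beyond the first paragraph is needed.
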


The proof of Theorem~\ref{RD} that follows depends on Proposition
\ref{lambda_condition}, which is stated within the proof; we defer the
proof of that to Section~\ref{sec:Prop}.

\begin{proofof}{Theorem~\ref{RD}}
For each $v \in V$ we choose a length function $\ell_v$ on $G_v$ with respect
to which $G_v$ has rapid decay. Then, given $g \in G$ and a reduced expression
$g=y_1\cdots y_k$ for $G$, with $y_j \in G_{v_j}$, we define
\[ \ell(g) = \sum_{j=1}^k \ell_{v_j}(y_j). \]
That $\ell$ is both well defined and a length function follows easily
from \cite[Theorem 3.9]{G90}.
On subgroups $G_J$ with $J \in \K$,
we see that $\ell$ restricts to $\ell_J$, defined in the same way as a sum of
functions $\ell_{v_j}$ with $v_j \in J$; it follows from
~\cite[Lemma 2.1.2]{J90} that
$G_J$ satisfies RD with respect to $\ell_J$. 

We prove rapid decay with respect to the length
function $\ell$ by verifying the condition
$(**)$ of Section~\ref{sec:reformulation}.
But rather than prove that directly we shall deduce it from a
condition on the length function $\lambda$, that is stated as 
Proposition~\ref{lambda_condition}.

Recall that $\Lambda_k$ is defined as the set $\{g \in G: \lambda(g)=k\}$.
We now define
$\chi_{(k)}$ to be the characteristic function on $\Lambda_k$ and
$\phi_{(k)}$
to be the pointwise product $\phi.\chi_{(k)}$.
In general a function labelled with the subscript $(k)$ is understood
to have support on $\Lambda_k$.

We shall prove the following in Section~\ref{sec:Prop}.
\begin{proposition}
\label{lambda_condition}
$\exists\ c,r>0 $ such that
for all $\phi,\psi \in \CG,k,l,m \in \N$, and 
$|k-l|\leq m \leq k + l$,
\[ 
||(\phi_{(k)} * \psi_{(l)})_{(m)}||_2 \leq c||\phi_{(k)}||_{2,r,\ell} ||
\psi_{(l)}||_2. \]
\end{proposition}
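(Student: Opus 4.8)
The plan is to verify the inequality by estimating
$\|(\phi_{(k)}*\psi_{(l)})_{(m)}\|_2^2=\sum_{g\in\Lambda_m}|(\phi_{(k)}*\psi_{(l)})(g)|^2$, where for each $g\in\Lambda_m$ the value $(\phi_{(k)}*\psi_{(l)})(g)=\sum_{(h_1,h_2)\in\Factors{k}{l}(g)}\phi(h_1)\psi(h_2)$ is a sum over the factorisations of $g$. Writing $q=k+l-m$, the hypothesis $|k-l|\le m\le k+l$ gives $0\le q\le 2\min(k,l)$. I would use Lemma~\ref{P2} to classify each factorisation $(h_1,h_2)$ by the clique $J$ and the maximal cancelling element $w$ that it produces, recording its type as the pair $(J,p)$ with $p=\lambda(w)$ and $j=|J|$, so that $q=j+2p$. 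Since $\Gamma$ has only finitely many cliques and $p$ ranges over $0\le p\le q/2$, there are only $O(\min(k,l))$ types; applying Lemma~\ref{CS} over the types pulls out a polynomial factor and reduces matters to bounding $\sum_{g\in\Lambda_m}|\Phi^{J,p}(g)|^2$ for each fixed type, where $\Phi^{J,p}(g)$ collects the contributions of the factorisations of that type.

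For a fixed type I would write $g=g_1sg_2$ for the underlying reduced ``core'' factorisation, with $s\in G_J$, $\lambda(s)=j$, $\lambda(g_1)=k-j-p=:k'$ and $\lambda(g_2)=l-j-p=:l'$, so that $h_1=g_1s_1w$ and $h_2=w^{-1}s_2g_2$ with $s=s_1s_2$. The number of cores of a given $g$ is at most $\FF{k'}{l'}(J)$, which is polynomial in $k$ by Lemma~\ref{P1}, so a second application of Lemma~\ref{CS}, over cores, reduces the problem to estimating $\sum|A(g_1,s,g_2)|^2$ over all cores, where $A(g_1,s,g_2)=\sum_w\sum_{s_1s_2=s}\phi(g_1s_1w)\psi(w^{-1}s_2g_2)$.

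The heart of the argument is the estimate of $\sum|A(g_1,s,g_2)|^2$, and this is where the hypothesis that each $G_v$, and hence each direct product $G_J$, satisfies RD enters. For fixed $g_1,g_2$ I would read the map $s\mapsto A(g_1,s,g_2)$ as $\sum_w\alpha_{g_1,w}*\beta_{g_2,w}$, a sum over the cancelling elements $w$ of convolutions in $G_J$ of the functions $\alpha_{g_1,w}(t)=\phi(g_1tw)$ and $\beta_{g_2,w}(t)=\psi(w^{-1}tg_2)$. The triangle inequality in $\ell^2(G_J)$, followed by RD for $G_J$ with respect to $\ell_J$ (valid by~\cite[Lemma 2.1.2]{J90}), bounds $\|\sum_w\alpha_{g_1,w}*\beta_{g_2,w}\|_2$ by $C\sum_w\|\alpha_{g_1,w}\|_{2,r,\ell_J}\|\beta_{g_2,w}\|_2$; a further application of Lemma~\ref{CS} over $w$ separates the $\phi$-- and $\psi$--factors, so that after summing over $g_1,g_2$ the estimate factorises. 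Re-indexing the two resulting sums by $h_1=g_1tw\in\Lambda_k$ and $h_2=w^{-1}tg_2\in\Lambda_l$, the multiplicities are bounded by $\FF{k'}{p}(J)$ and $\FF{p}{l'}(J)$ (Lemma~\ref{P1}), while $(1+\ell_J(t))^{2r}\le(1+\ell(h_1))^{2r}$ because $\ell$ is additive along reduced expressions; this gives $\sum|A|^2\le\mathrm{poly}(k)\,\|\phi_{(k)}\|_{2,r,\ell}^2\,\|\psi_{(l)}\|_2^2$, with the weighted norm on $\phi$ forced precisely by the use of RD for $G_J$.

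Collecting the polynomial factors from the two applications of Lemma~\ref{CS}, the core count, and the two multiplicities yields a bound of the form $\|(\phi_{(k)}*\psi_{(l)})_{(m)}\|_2\le P(k)\,\|\phi_{(k)}\|_{2,r,\ell}\,\|\psi_{(l)}\|_2$ for some polynomial $P$. The final step is to absorb $P(k)$ into the Sobolev norm: every $g\in\Lambda_k$ satisfies $\ell(g)\ge\lambda(g)=k$, since each of its $k$ syllables contributes at least $1$ to $\ell$, and hence $(1+k)^{2d}\|\phi_{(k)}\|_{2,r,\ell}^2\le\|\phi_{(k)}\|_{2,r+d,\ell}^2$; choosing $d\ge\deg P$ converts the bound into $c\,\|\phi_{(k)}\|_{2,r+d,\ell}\,\|\psi_{(l)}\|_2$, which is the Proposition with $r$ replaced by $r+d$. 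I expect the central estimate of the previous paragraph to be the main obstacle: one must make the RD of the clique group $G_J$ (which unavoidably brings in the weighted norm, and is the reason the direct--product case of~\cite{J90} is needed) cooperate with the sum over the cancelling elements $w$ while keeping every multiplicity polynomially bounded. Care is also needed in the bookkeeping of Lemma~\ref{P2}, so that the classification of factorisations by their type $(J,p)$ is a genuine partition.
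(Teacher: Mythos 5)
Your proposal is correct and follows essentially the same route as the paper's proof: classify factorisations via Lemma~\ref{P2} by the type $(J,p)$ and the core $(g_1,s,g_2)$, pull out polynomially bounded multiplicities via Lemmas~\ref{CS} and~\ref{P1}, invoke RD for the clique subgroups $G_J$ (through Jolissaint's direct-product lemma), re-index the resulting sums by $h_1=g_1tw$ and $h_2=w^{-1}tg_2$, and absorb the polynomial factor into the Sobolev norm using $\ell(g)\geq k$ on $\Lambda_k$. The only differences are organisational and harmless: the paper applies Cauchy--Schwarz over $w$ first, aggregating into the functions ${}^{g_1}\phi^{(p)}_{(q-2p)}$ and ${}^{(p)}\psi^{g_2}_{(q-2p)}$ and invoking RD once per $(g_1,g_2,J,p)$, whereas you invoke RD per $w$-slice and then separate the $\phi$- and $\psi$-factors over $w$ --- note only that this separating step is the genuine Cauchy--Schwarz inequality rather than Lemma~\ref{CS}, since the latter would introduce the (infinite) factor $|\Lambda_p|$.
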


To derive our main result from this proposition, we shall
use it to deduce the condition $(**)$ of Section~\ref{sec:reformulation},
that is, we shall deduce the similar condition in which restrictions to
$\Lambda_k,\Lambda_l,\Lambda_m$ are replaced by restrictions to $C_k,C_l,C_m$.
(But note that both of the length functions $\lambda$ and $\ell$ are
involved in the proposition statement.)
Jolissaint's proof of RD for free products follows exactly the
same strategy, and the argument below is basically his, with some slight
modification of notation to match our own.
We suppose that $k,l,m$ are fixed in the appropriate range.
We write $\phi'$ rather than $\phi_k$ and $\psi'$ rather than $\psi_l$,
to make our notation less cumbersome.

Since $\ell(g) \ge 1$ for all $g \in G \setminus\{1\}$, we have
$C_k \subseteq \cup_{j=0}^k \Lambda_j$ and $C_l \subseteq
\cup_{i=0}^l \Lambda_i$, and hence $\phi' = \sum_{j=0}^k \phi'_{(j)}$,
$\psi'=\sum_{i=0}^l \psi'_{(i)}$.
Similarly, for fixed $j$, we have
$|(\phi'_{(j)} * \psi')_m(g)| \le | \sum_{p=0}^m (\phi'_{j} * \psi')_{(p)}(g)|$,
for all $g \in G$.
(Note that we dropped the restriction to $C_m$ on the right hand side of
this inequality.)
Hence
\[ ||(\phi'_{(j)} * \psi')_{m}||_2^2 \leq
\sum_{p=0}^m || (\phi'_{(j)}*\psi')_{(p)}||_2^2.\]

Now, in any product of group elements $g_1g_2 = g$ with $g_1 \in \Lambda_i$,
$g_2 \in \Lambda_j$, $g \in \Lambda_p$, we must have
$p\leq i+j$, $i\leq p+j$, $j \leq p+i$.
Hence in each of the terms $(\phi'_{(j)}*\psi')_{(p)}$ 
in the sum on the right hand side of the
above inequality, for fixed $j$ and $p$, the support of $\psi'$ lies in
the union of the $\Lambda_i$ with $|j-p| \le i \le j+p$,
and so
\[ \sum_{p=0}^m || (\phi'_{(j)}*\psi')_{(p)}||_2^2
\leq 
 \sum_{p=0}^m || \sum_{i=|j-p|}^{j+p} (\phi'_{(j)}*\psi'_{(i)})_{(p)}||_2^2\]
Since there are at most $2j+1$ values
of $i$ in each of the ranges $[|j-p|,j+p]$,
we can use Lemma ~\ref{CS} to bound the right hand side above by
\[ (2j+1) \sum_{p=0}^m \sum_{i=|j-p|}^{j+p}
                 || (\phi'_{(j)}*\psi'_{(i)})_{(p)}||_2^2.\]

It follows from Proposition~\ref{lambda_condition} that, for
$|j-i| \leq p \leq j+i$,
\[ ||(\phi'_{(j)} * \psi'_{(i)})_{(p)}||_2 \leq c||\phi'_{(j)}||_{2,r,\ell} ||
\psi'_{(i)}||_2.\] For other values of $p$, the left hand side is zero.  
Hence
\[
||(\phi'_{(j)} * \psi')_{m}||_2^2 
\leq c^2(2j+1) ||\phi'_{(j)}||^2_{2,r,\ell} \sum_{p=0}^m
\sum_{i=|j-p|}^{j+p} || \psi'_{(i)}||_2^2.\]
Since $|i-j| \le p \le i+j$, for a given value of $i$, there are at most
$2j+1$ values of $p$ in the above summation, and so we have
\[
||(\phi'_{(j)} * \psi')_{m}||_2^2 \leq
    c^2(2j+1)^2 ||\phi'_{(j)}||^2_{2,r,\ell} 
           \sum_{i=0}^{j+m} || \psi'_{(i)}||_2^2 \leq
    c^2(2j+1)^2 ||\phi'_{(j)}||^2_{2,r,\ell} 
           || \psi'||_2^2.\]
Now, using the triangle inequality together with Lemma~\ref{CS} again,
we have
\begin{eqnarray*}
||(\phi_k * \psi_l)_{m}||_2^2 =
||(\phi' * \psi')_{m}||_2^2 &\leq& (\sum_{j=0}^k
                             ||(\phi'_{(j)} * \psi')_m||)^2\\
&\leq& (k+1) \sum_{j=0}^k ||(\phi'_{(j)} * \psi')_{m}||^2 \\
&\leq& c^2(k+1)\sum_{j=0}^k(2j+1)^2 ||\phi'_{(j)}||^2_{2,r,\ell}
           || \psi'||_2^2 \\
&\leq& c^2(k+1)(2k+1)^2 ||\phi_k||^2_{2,r,\ell}
           || \psi_l||_2^2 \\
&=& P(k) ||\phi_k||^2_2
           || \psi_l||_2^2,
\end{eqnarray*}
where the polynomial $P$ has degree $3+2r$.
So we have deduced $(**)$.

The proof of the theorem will be complete once
Proposition~\ref{lambda_condition} is proved.
\end{proofof}

\section{Technicalities of the proof of Proposition \ref{lambda_condition}}
In an attempt to make the proof of Proposition \ref{lambda_condition}
more readable we start with some technical results and definitions.

For a function $\phi_{(k)}$ 
and $p \ge 0$, we can define functions $\phi^{(p)}_{(k-p)}$ and
${}^{(p)}\phi_{(k-p)}$  by 

\begin{eqnarray*}
\phi^{(p)}_{(k-p)}(u) &=&
\left\{
\begin{array}{rl}
\sqrt{\sum_{w \in \Lambda_{p}} | \phi_{(k)}(uw)|^2} & \quad {\rm if} \
   u \in \Lambda_{k-p}\\
0 &\quad {\rm otherwise} \\
\end{array}
\right.
\\
{}^{(p)}\phi_{(k-p)}(u) &=&\left\{ \begin{array}{rl} \sqrt{\sum_{w \in
\Lambda_{p}} | \phi_{(k)}(w^{-1}u)|^2} & \quad\textrm{if}\ 
u \in \Lambda_{k-p}\\
0 & \quad\hbox{\rm otherwise}\end{array} \right.
\end{eqnarray*}

Note that these functions are non-negative real-valued; we shall sometimes
make use of that fact as we bound sums.

\begin{lemma}
\label{relate_l2_norms}
\[||\phi^{(p)}_{(k-p)}||_2^2\ \leq\ \FF{k-p}{p}||\phi_{(k)}||_2^2 \quad{\rm and}
\quad ||{}^{(p)}\phi_{(k-p)}||_2^2 \ \leq\ \FF{k-p}{p}||\phi_{(k)}||_2^2. \]
\end{lemma}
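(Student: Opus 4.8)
The plan is to prove both inequalities by the same mechanism: expand the squared $l^2$ norm as a double sum, interchange nothing but the \emph{bookkeeping} — regroup the pairs by the value of the relevant product — and observe that the multiplicity attached to each group value is exactly a factorisation count, which is bounded by $\FF{k-p}{p}$.

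For the first inequality, I would begin by unwinding the definition of $\phi^{(p)}_{(k-p)}$ and using that it vanishes off $\Lambda_{k-p}$, giving
\[ ||\phi^{(p)}_{(k-p)}||_2^2 = \sum_{u \in \Lambda_{k-p}} \sum_{w \in \Lambda_p} |\phi_{(k)}(uw)|^2. \]
Since $\phi_{(k)}$ is supported on $\Lambda_k$, only pairs $(u,w)$ with $uw \in \Lambda_k$ contribute (and any such product automatically satisfies $\lambda(uw)\le (k-p)+p = k$). The key step is to reorganise this double sum by collecting, for each fixed $g \in \Lambda_k$, all pairs $(u,w) \in \Lambda_{k-p}\times\Lambda_p$ with $uw =_G g$. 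On such a group of pairs the summand $|\phi_{(k)}(g)|^2$ is constant, and the number of pairs in the group is precisely $|\Factors{k-p}{p}(g)|$. Thus the norm rewrites as $\sum_{g \in \Lambda_k} |\Factors{k-p}{p}(g)|\,|\phi_{(k)}(g)|^2$, and bounding each factorisation count by the supremum $\FF{k-p}{p}$ yields the claimed bound.

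For the second inequality the argument is essentially identical, starting from
\[ ||{}^{(p)}\phi_{(k-p)}||_2^2 = \sum_{u \in \Lambda_{k-p}} \sum_{w \in \Lambda_p} |\phi_{(k)}(w^{-1}u)|^2. \]
Here I would substitute $h=w^{-1}$, noting that $\lambda(h)=\lambda(w)=p$ so that $h$ ranges over $\Lambda_p$, and then group the sum by the product value $g = hu \in \Lambda_k$. Each group now has $|\Factors{p}{k-p}(g)|$ members, and since $\FF{p}{k-p}=\FF{k-p}{p}$ by the inverse symmetry already recorded after the definition of $\FF{k}{l}$, the same bound $\FF{k-p}{p}$ applies.

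This is a routine Fubini-style rearrangement, so I do not expect a genuine obstacle. The only points requiring a little care are to confirm that regrouping the double sum by product value really produces the factorisation count $|\Factors{k-p}{p}(g)|$ (respectively $|\Factors{p}{k-p}(g)|$) as the multiplicity, and, in the second case, to invoke the symmetry $\FF{p}{k-p}=\FF{k-p}{p}$ so that both inequalities share the single bound in the statement.
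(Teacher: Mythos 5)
Your proposal is correct and is essentially the paper's own argument: the paper's (very terse) proof performs exactly this regrouping of the double sum by the product value, bounding the multiplicity $|\Factors{k-p}{p}(g)|$ by $\FF{k-p}{p}$, and handles the second inequality via the symmetry $\FF{p}{k-p}=\FF{k-p}{p}$. You have simply made explicit the bookkeeping that the paper leaves implicit.
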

\begin{proof}
\begin{eqnarray*}
||\phi^{(p)}_{(k-p)}||_2^2 &=& \sum_{u \in \Lambda_{k-p}}\sum_{w \in
\Lambda_{p}}|\phi_{(k)}(uw)|^2 \\ & \leq & \FF{k-p}{p} \sum_{h \in
\Lambda_{k}} |\phi_{(k)}(h)|^2\\
&=& \FF{k-p}{p} ||\phi_{(k)}||_2^2\\
\end{eqnarray*}
The second inequality follows similarly, since $\FF{k-p}{p} = \FF{p}{k-p}$.
\end{proof}

For $0 \le i \le k$ and  $g \in \Lambda_{k-i}$, we define functions
$\phi^g_{(i)}$ and ${}^g\phi_{(i)}$  by 
\begin{eqnarray*}
\phi^{g}_{(i)}(v) &=& \left\{ \begin{array}{rl}
    \phi_{(k)}(vg) & \quad {\rm if}\ vg \in \Lambda_{k}\\
0 &  \quad\hbox{\rm otherwise}\end{array} \right.
\\
{}^{g}\phi_{(i)}(v) &=& \left\{ \begin{array}{rl}
    \phi_{(k)}(gv) & \quad {\rm if}\ gv \in \Lambda_{k}\\
0 & \quad\hbox{\rm otherwise}\end{array} \right.
\end{eqnarray*}

\section{Proof of Proposition \ref{lambda_condition}}
\label{sec:Prop}
\begin{proof}

Suppose that $m=k+l-q$, with $q \ge 0$, and that
$g \in \Lambda_{k+l-q}$.
By Lemma~\ref{P2}, for each factorisation of $g$ as
a product $h_1h_2$ with $h_1 \in \Lambda_k$, $h_2 \in \Lambda_l$,
there is a 5-tuple $(g_1,g_2,s_1,s_2,w)$ of elements with syllable
lengths $k-q+p,\,l-q+p,\,q-2p,\,q-2p,\,p$,
for which $h_1= g_1s_1w$, $h_2=w^{-1}s_2g_2$, $s:= s_1s_2 \in G_J$ has
syllable length $q-2p$, and $s_1,s_2,s$ are all elements of $G_J$ for some
$J \in \K_{q-2p}$.

For ease of notation we now
define, for $s \in G_J$,
\[
\JFactors(J,s) :=\{(s_1,s_2) \in G_J \times G_J:  s= s_1s_2,
   \lambda(s_1)=\lambda(s_2)=|J|\}.
\]

Now for any $g \in \Lambda_{k+l-q}$,
$|\phi_{(k)} * \psi_{(l)}(g)|$
is bounded above by
\[
\sum^{p=\lfloor{q/2}\rfloor}_{\scriptsize\begin{array}{c}p=1,\\
J \in \K_{q-2p}\end{array}}
\sum_{\scriptsize\begin{array}{c}(g_1,s,g_2)\in\\
\Factors{k-q+p}{l-q+p}(J,g)\end{array}}
\sum_{(s_1,s_2) \in \JFactors(J,s)}\,
\sum_{w \in \Lambda_p}
|\phi_{(k)} (g_1s_1w)\psi_{(l)} (w^{-1}s_2g_2)|,
\]
by the triangle inequality.

By Cauchy-Schwarz
\begin{eqnarray*}
\sum_{w \in \Lambda_p}
|\phi_{(k)} (g_1s_1w)\psi_{(l)} (w^{-1}s_2g_2)|&\leq&
\sqrt{ \sum_{w \in \Lambda_p} |\phi_{(k)} (g_1s_1w)|^2}
\sqrt{ \sum_{w \in \Lambda_p} |\psi_{(l)} (w^{-1}s_2g_2)|^2}\\
&=&
\phi^{(p)}_{(k-p)}(g_1s_1) \times {}^{(p)}\psi_{(l-p)}(s_2g_2)\\
&=&
{}^{g_1}\phi^{(p)}_{(q-2p)}(s_1) \times {}^{(p)}\psi^{g_2}_{(q-2p)}(s_2).
\end{eqnarray*}
Further,
\begin{eqnarray*}
\sum_{(s_1,s_2) \in \JFactors(J,s)}\,
{}^{g_1}\phi^{(p)}_{(q-2p)}(s_1)\times {}^{(p)}\psi^{g_2}_{(q-2p)}(s_2)
&\leq&
{}^{g_1}\phi^{(p)}_{(q-2p)}* {}^{(p)}\psi_{(q-2p)}^{g_2}(s),\\
\end{eqnarray*}
where the convolution here is over $G_J$, not over $G$.

Then we apply the Lemma~\ref{CS} to see that 

\begin{eqnarray*}
|\phi_{(k)} * \psi_{(l)}(g)|^2
&\leq&
\left (
\sum^{p=\lfloor{q/2}\rfloor}_{\scriptsize\begin{array}{c}p=1,\\
J \in \K_{q-2p}\end{array}}
\sum_{\scriptsize\begin{array}{c}(g_1,s,g_2)\in\\
\Factors{k-q+p}{l-q+p}(J,g)\end{array}}
{}^{g_1}\phi^{(p)}_{(q-2p)}* {}^{(p)}\psi_{(q-2p)}^{g_2}(s)\right)^2\\
&\leq& \MF(k,q,l)
\sum^{p=\lfloor{q/2}\rfloor}_{\scriptsize\begin{array}{c}p=1,\\
J \in \K_{q-2p}\end{array}}
\sum_{\scriptsize\begin{array}{c}(g_1,s,g_2)\in\\
\Factors{k-q+p}{l-q+p}(J,g)\end{array}}
\left({}^{g_1}\phi^{(p)}_{(q-2p)}* {}^{(p)}\psi_{(q-2p)}^{g_2}(s)\right)^2.\\
\end{eqnarray*}
where $\MF(k,q,l):=
\sum_{p=1}^{\lfloor q/2\rfloor} \sum_{J \in \K_{q-2p}} \FF{k-q+p}{l-q+p}(J)$.
Since there are only finitely many cliques $J$, it follows from
Lemma~\ref{P1} that $\MF(k,q,l)$ is bounded by $Q(k)$ for some polynomial $Q$.
Hence

\begin{eqnarray*}
||(\phi_{(k)} * \psi_{(l)})_{(m)}||_2^2
&=&\sum_{g \in \Lambda_m}|\phi_{(k)} * \psi_{(l)}(g)|^2 \\
&\leq & Q(k)
\sum^{p=\lfloor{q/2}\rfloor}_{\scriptsize\begin{array}{c}p=1,\\
J \in \K_{q-2p}\end{array}}
\sum_{\scriptsize\begin{array}{c}g_1 \in \Lambda_{k-q+p}\\s \in
G_J,\\
g_2 \in \Lambda_{l-q+p}\end{array}}
({}^{g_1}\phi^{(p)}_{(q-2p)}* {}^{(p)}\psi_{(q-2p)}^{g_2}(s))^2\\
&=& Q(k)
\sum^{p=\lfloor{q/2}\rfloor}_{\scriptsize\begin{array}{c}p=1,\\
J \in \K_{q-2p}\end{array}}
\sum_{\scriptsize\begin{array}{c}g_1 \in \Lambda_{k-q+p}\\
g_2 \in \Lambda_{l-q+p}\end{array}}
||{}^{g_1}\phi^{(p)}_{(q-2p)}* {}^{(p)}\psi_{(q-2p)}^{g_2})||_{2;G_J}^2.\\
\end{eqnarray*}
But now, since RD holds with respect to $\ell_J$ in each of the groups $G_J$,
we have

\begin{eqnarray*}
|| {}^{g_1}\phi^{(p)}_{(q-2p)}* {}^{(p)}\psi^{g_2}_{(q-2p)} ||_{2;G_J}^2
&\leq&
c_J^2|| {}^{g_1}\phi^{(p)}_{(q-2p)}||^2_{2,r_J,\ell_J;G_J}\quad||
{}^{(p)}\psi^{g_2}_{(q-2p)} ||_{2;G_J}^2.\\
\end{eqnarray*}
We deduce easily from this that
\begin{eqnarray*}
\sum_{\scriptsize\begin{array}{c}g_1 \in \Lambda_{k-q+p},\\g_2\in
\Lambda_{l-q+p}\end{array}}
|| {}^{g_1}\phi_{(q-2p)}* \psi^{g_2}_{(q-2p)} ||_{2;G_J}^2
&\leq& c_J^2
\sum_{g_1 \in \Lambda_{k-q+p}} ||
{}^{g_1}\phi^{(p)}_{(q-2p)}||^2_{2,r_J,\ell_J;G_J} \times\\
&&\hspace*{15pt}
\sum_{g_2\in \Lambda_{l-q+p}} ||{}^{(p)}\psi^{g_2}_{(q-2p)} ||_{2;G_J}^2.
\end{eqnarray*}
Then, for $c=\max c_J$ and $r = \max r_J$, we have
\begin{eqnarray*}
&&\sum^{p=\lfloor{q/2}\rfloor}_{\scriptsize\begin{array}{c}p=1,\\
J \in \K_{q-2p}\end{array}}
\sum_{\scriptsize\begin{array}{c}g_1 \in \Lambda_{k-q+p}\\
g_2 \in \Lambda_{l-q+p}\end{array}}
|| {}^{g_1}\phi^{(p)}_{(q-2p)}* {}^{(p)}\psi^{g_2}_{(q-2p)} ||_{2;G_J}^2\\
&\leq&
c^2 \sum^{p=\lfloor{q/2}\rfloor}_{\scriptsize\begin{array}{c}p=1,\\
J \in \K_{q-2p}\end{array}}
\left(
\sum_{g_1 \in \Lambda_{k-q+p}} ||
{}^{g_1}\phi^{(p)}_{(q-2p)}||^2_{2,r_J,\ell_J;G_J}
\sum_{g_2\in \Lambda_{l-q+p}}||{}^{(p)}\psi^{g_2}_{(q-2p)}
||_{2;G_J}^2\right) \\
&\leq& c^2
\left (
\sum^{p=\lfloor{q/2}\rfloor}_{\scriptsize\begin{array}{c}p=1,\\
J \in \K_{q-2p}\end{array}}
\sum_{g_1 \in \Lambda_{k-q+p}} ||
{}^{g_1}\phi^{(p)}_{(q-2p)}||^2_{2,r,\ell_J;G_J}\right) \times\\
&&\hspace*{100pt}
\left (
\sum^{p=\lfloor{q/2}\rfloor}_{\scriptsize\begin{array}{c}p=1,\\
J \in \K_{q-2p}\end{array}}
\sum_{g_2\in \Lambda_{l-q+p}}|| {}^{(p)}\psi^{g_2}_{(q-2p)} ||_{2;G_J}^2
\right).
\end{eqnarray*}
But
\begin{eqnarray*}
&&\sum^{p=\lfloor{q/2}\rfloor}_{\scriptsize\begin{array}{c}p=1,\\
J \in \K_{q-2p}\end{array}}
\sum_{g_1 \in \Lambda_{k-q+p}} ||
{}^{g_1}\phi^{(p)}_{(q-2p)}||^2_{2,r,\ell_J;G_J}\\
&=& \sum^{p=\lfloor{q/2}\rfloor}_{\scriptsize\begin{array}{c}p=1,\\
J \in \K_{q-2p}\end{array}}
\sum_{\scriptsize\begin{array}{c}g_1 \in \Lambda_{k-q+p},\\s \in G_J\\
w \in \Lambda_p \end{array}}
| \phi_{(k)}(g_1sw)|^2(1+\ell_J(s))^{2r}.
\end{eqnarray*}
Since $\lfloor{q/2}\rfloor \le k$ and the number of sets $J$ is bounded,
Lemma~\ref{P1} implies that the number
of factorisations of a fixed $g' \in \Lambda_k$ as $g_1sw$ in the
above sum is at most $P(k)$ for some polynomial $P$.
So the sum is bounded above by
\[
P(k) \sum_{g' \in \Lambda_{k}} | \phi_{(k)}(g')|^2(1+\ell(g'))^{2r}
= P(k)  ||\phi_{(k)}||^2_{2,r,\ell},
\]
and similarly
\[
\sum^{p=\lfloor{q/2}\rfloor}_{\scriptsize\begin{array}{c}p=1,\\
J \in \K_{q-2p}\end{array}}
\sum_{g_2\in \Lambda_{l-q+p}} ||{}^{(p)}\psi^{g_2}_{(q-2p)} ||_{2;G_J}^2
\leq P(k) ||\psi_{(l)}||^2_2.
\]
Hence
\begin{eqnarray*}
|| (\phi_{(k)}*\psi_{(l)})_{(m)}||_2^2
&\leq&
c^2Q(k)P(k)^2 ||\phi_{(k)}||^2_{2,r,\ell} ||\psi_{(l)}||^2_2\\
&\leq&
c^2||\phi_{(k)}||^2_{2,r+\deg(Q)+2\deg(P),\ell} ||\psi_{(l)}||^2_2,
\end{eqnarray*}
where the final inequality uses the fact that $k \le \ell(g)$ for all
$g \in \Lambda_k$.
This completes the proof of Proposition~\ref{lambda_condition}.
\end{proof}

\section*{Acknowledgments}

The first author was partially supported by the Marie Curie Reintegration
Grant 230889. The third author would like to thank the Mathematics
Departments
of the Universities of Fribourg and Neuch\^atel, Switzerland, for their
hospitality and generous support.

\bigskip

\textsc{L. Ciobanu,
Mathematics Department,
University of Fribourg,
Chemin du Muse\'e 23,
CH-1700 Fribourg, Switzerland
}

\emph{E-mail address}{:\;\;}\texttt{Laura.Ciobanu@unifr.ch}
\medskip

\bigskip

\textsc{D. F. Holt,
Mathematics Institute,
University of Warwick,
Coventry CV4 7AL,
UK
}

\emph{E-mail address}{:\;\;}\texttt{D.F.Holt@warwick.ac.uk}
\medskip
\bigskip

\textsc{S. Rees,
School of Mathematics and Statistics,
University of Newcastle,
Newcastle NE1 7RU,
UK
}

\emph{E-mail address}{:\;\;}\texttt{Sarah.Rees@newcastle.ac.uk}
\medskip

\end{document}